\documentclass[12pt, A4]{article}
\usepackage{amsmath,amssymb,amsfonts,mathrsfs,hyperref,microtype, amsthm}

\usepackage{geometry}
 \geometry{
 a4paper,
 total={210mm,297mm},
 left=20mm,
 right=20mm,
 top=20mm,
 bottom=20mm,
 }


\usepackage{eufrak}
\usepackage{graphicx,color}
\usepackage[T1]{fontenc}
\usepackage{rotating}
\usepackage{booktabs}
\usepackage{mathtools}
\usepackage{float}
\usepackage{breqn}
\usepackage{graphicx}
\usepackage{caption}

\newtheorem{thm}{Theorem}[section]
\newtheorem{rem}{Remark}[section]
\newtheorem{definition}{Definition}[section]
\newtheorem{lemme}{Lemma}[section]
\newtheorem{prop}{Proposition}[section]

\numberwithin{equation}{section}

\def\EE{\widehat{\eta}}
\def\HH{ \EuFrak H}
\def\N{{\rm I\kern-0.16em N}}
\def\R{{\rm I\kern-0.16em R}}
\def \E{{\rm I\kern-0.16em E}}
\def\P{{\rm I\kern-0.16em P}}
\def\F{{\rm I\kern-0.16em F}}
\def\B{{\rm I\kern-0.16em B}}
\def\C{{\rm I\kern-0.46em C}}
\def\G{{\rm I\kern-0.50em G}}

\newcommand{\ud}{\mathrm{d}}

\numberwithin{equation}{section}

\font\eka=cmex10
\usepackage{ae}

\def\ind{\mathrel{\hbox{\rlap{%
\hbox to 7.5pt{\hrulefill}}\raise6.6pt\hbox{\eka\char'167}}}}
\parindent0pt


\begin{document}

\title{\Large{\bf Optimal Berry-Esseen bounds  \\ on the Poisson space}}



\renewcommand{\thefootnote}{\fnsymbol{footnote}}

\author{Ehsan Azmoodeh \\ \small{Unit\'{e} de Recherche en Math\'{e}matiques, Luxembourg University} \\ \small{{ \tt ehsan.azmoodeh@uni.lu}} \medskip \\
Giovanni Peccati \\ \small{Unit\'{e} de Recherche en Math\'{e}matiques, Luxembourg University} \\ \small{ {\tt giovanni.peccati@gmail.com}}}

\footnotetext{ Azmoodeh is supported by research project F1R-MTH-PUL-12PAMP.}
\maketitle

\abstract We establish new lower bounds for the normal approximation in the Wasserstein distance of random variables that are functionals of a Poisson measure. Our results generalize previous findings by 
Nourdin and Peccati (2012, 2015) and Bierm\'e, Bonami, Nourdin and Peccati (2013), involving random variables living on a Gaussian space. Applications are given to optimal Berry-Esseen bounds for edge 
counting in random geometric graphs. 
\vskip0.3cm
\noindent {\bf Keywords}: Berry-Esseen Bounds; Limit Theorems; Optimal Rates; Poisson Space; Random Graphs; Stein's Method; $U$-statistics.

\noindent{\bf MSC 2010}: 60H07; 60F05; 05C80.  

\section{Introduction} 

\subsection{Overview}

Let $(Z,\mathcal{Z})$ be a Borel space endowed with a $\sigma$-finite non-atomic measure $\mu$, and let $\EE$ be a compensated Poisson random measure on the state space $(Z,\mathcal{Z})$, with non-atomic and $\sigma$-finite control measure 
$\mu$ (for the rest of the paper, we assume that all random objects are defined on a common probability space $(\Omega, \mathscr{F}, \P)$). Consider a sequence of centered random variables $F_n = F_n(\EE)$, $n\geq 1$ and assume that, as $ n\to \infty$, ${\rm Var}(F_n)\to 1$  and $F_n$ converges in distribution to a standard Gaussian random variable. In recent years (see e.g. \cite{bp, e-t-kolbound, rey-p-1, rey-p-2, lps, p-s-t-u, r-s, s}) several new techniques  -- based on the interaction between Stein's method \cite{c-g-s-book} and Malliavin calculus \cite{l} -- have been introduced, allowing one to find explicit Berry-Esseen bounds of the type
\begin{equation}\label{e:i}
d(F_n, N)\leq \varphi(n),\quad n\geq 1,
\end{equation}
where $d$ is some appropriate distance between the laws of $F_n$ and $N$, and $\{\varphi(n) :n \geq 1\}$ is an explicit and strictly positive numerical sequence converging to 0. The aim of this paper is to find some general sufficient conditions, ensuring that the rate of convergence induced by $\varphi(n)$ in \eqref{e:i} is {\it optimal}, whenever $d$ equals the 1-Wasserstein distance $d_W$, that is:
\begin{equation}\label{Wass}
d(F_n, N) = d_W(F_n,N): = \sup_{h\in {\rm Lip}(1)} | \mathbb{E}[h(F_n)]-
\mathbb{E}[h(N)] |,
\end{equation}
with ${\rm Lip}(a)$ indicating the set of $a$-Lipschitz mappings on $\R$ ($a>0$). As usual, the rate of convergence induced by  $\varphi(n)$ is said to be optimal if there exists a constant $c\in (0,1)$ (independent of $n$) such that, for $n$ large enough,
\begin{equation}\label{e:opti}
\frac{d_W(F_n,N)}{\varphi(n)}\in (c, 1].
\end{equation}

As demonstrated below, our findings generalize to the framework of random point measures some previous findings (see \cite{bbnp, simone, n-p-exact, n-p-pams}) for random variables living on a Gaussian space. Several important differences between the Poisson and the Gaussian settings will be highlighted as our analysis unfolds. Important new applications $U$-statistics, in particular to edge-counting in random geometric graphs, are discussed in Section \ref{S:applications}.

\subsection{Main abstract result (and some preliminaries)}\label{S:main-resut}

Let the above assumptions and notation prevail. The following elements are needed for the subsequent discussion, and will be formally introduced and discussed in Section \ref{SS : 4Operators}:
\begin{itemize}

\item[--] For every $z\in Z$ and any functional $F = F(\EE)$, the {\it difference} (or {\it add-one cost operator}) $D_zF(\EE) = F(\EE+\delta_z)-F(\EE)$. For reasons that are clarified below, we shall write $F\in {\rm dom} \, D$, whenever $\E\int_Z (D_s F)^2 \mu({\rm d} s)<\infty$.

\item[--] The symbol $L^{-1}$ denotes the {\it pseudo-inverse of the generator of the Ornstein-Uhlenbeck semigroup} on the Poisson space.

\end{itemize}

We also denote by $N\sim \mathscr{N}(0,1)$ a standard Gaussian random variable with mean zero and variance one. It will be also necessary to consider the family
$$
\mathscr{F}_{W}:= \{ f:\R \to \R \, : \, \Vert f' \Vert_\infty \le 1 \, \text{and} \, f'\in {\rm Lip}(2) \},
$$
whereas the notation $\mathscr{F}_0$ indicates the subset of $\mathscr{F}_{W}$ that is composed of twice continuously differentiable functions such that $\Vert f' \Vert_\infty \le 1$ and $\Vert f'' \Vert_\infty \le 2$. 

\medskip

For any two sequences $\{a_n\}_{n \ge 1}$ and $\{b_n\}_{n \ge 1}$ of non-negative 
real numbers, the notation $a_n \sim b_n$ indicates that $\lim_{n \to \infty} \frac{a_n}{b_n}=1$. 

\medskip

The next theorem is the main theoretical achievement of the present paper.

\begin{thm}\label{thm:Main1}
Let $\{F_n: n \ge 1\}$ be a sequence of square-integrable functionals of $\EE$, such that $\E(F_n) =0$, and $F_n \in {\rm dom}\, D$. Let $\{\varphi(n):n\geq 1\}$ be a numerical sequence such that  $\varphi(n)\geq  \varphi_1(n) + \varphi_2(n)$, where 

\begin{align}\label{eq:rates}
\varphi_1(n) & := \sqrt{\E \left( 1 -  \langle DF_n,-DL^{-1}F_n \rangle_{L^2(\mu)} \right)^2 },\\
\varphi_2(n) & : =  \E \int_Z  (D_z F_n)^2 \times \vert D_zL^{-1} F_n \vert \mu(\ud z).
\end{align}

\noindent{\rm \bf (I)} For every $n$, one has the estimate $d_W(F_n, N)\leq \varphi(n)$.

\medskip

\noindent{\rm \bf (II)} Fix $f \in \mathscr{F}_0$, set $R^f_n(z):= \int_0^1 f''(F_n + (1-u) D_z F_n) u \, \ud u$ for any $z \in Z$, and assume moreover that the following asymptotic conditions are in order :

\begin{enumerate}
 \item[\rm (i)] {\rm (a)} $ \varphi(n)$ is finite for every n; {\rm (b)} $\varphi(n) \to 0 $ as $n \to \infty$; and {\rm (c)} there exists $m \ge 1$ such that $\varphi(n) > 0$ for all $n \ge m$. 
 \item[\rm (ii)] For $\mu({\rm d} z)$-almost every $z \in Z$, the sequence $D_z F_n$ converges in probability towards zero.
 \item[\rm (iii)] There exist a centered two dimensional Gaussian random vector $(N_1,N_2)$ with $\E (N^2_1)=\E(N^2_2)=1$, and $\E (N_1 \times N_2)= \rho$, and moreover a real number $\alpha \ge 0$ such that
 
 \begin{equation*}
 \left( F_n , \frac{ 1 - \langle DF_n,-DL^{-1}F_n \rangle_{L^2(\mu)}}{\varphi(n)} \right) \stackrel{\text{law}}{\to} (N_1, \alpha  N_2).
 \end{equation*}
\item[\rm (iv)] There exists a sequence $\{u_n: n \ge 1\}$ of deterministic and non-negative measurable functions such that $ \int_Z u_n(z) \mu(\ud z) / \varphi(n) \to \beta <\infty$, and moreover

\begin{equation*}
\frac{1}{\varphi(n)}\left\{ \int_Z (D_z F_n)^2 \times (-D_zL^{-1} F_n) \times R^f_n(z) \mu(\ud z) - \int_Z u_n(z) \times R^f_n(z) \mu (\ud z)\right\} \stackrel{L^1(\Omega)}{\longrightarrow} 0,
\end{equation*}
and $\sup_n \varphi(n)^{-(1+\epsilon)} \int_Z u_n(z)^{1+\epsilon} \mu(\ud z)<\infty$, for some $\epsilon>0$.

\end{enumerate}
Then, as $n \to \infty$, we have 

\begin{equation*}
\frac{\E \left( f'(F_n)  - F_n  f(F_n) \right) }{\varphi(n)} \to \left\{ \frac{\beta}{2} +  \rho\, \alpha \right\} \, \E (f''(N)).
\end{equation*}

\medskip

\noindent{\bf (III)} If Assumptions {\rm (i)}--{\rm (iv)} at Point {\bf (II)} are verified and $\rho\alpha\neq \frac{\beta}{2}$, then the rate of convergence induced by $\varphi(n)$ is optimal, in the sense of \eqref{e:opti}.

\end{thm}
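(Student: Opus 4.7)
\medskip

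The plan is to combine the upper bound $d_W(F_n,N) \le \varphi(n)$ furnished by Point \textbf{(I)} with a matching lower bound of the form $d_W(F_n,N) \ge c\,\varphi(n)$ for some $c\in(0,1)$. The lower bound will come from instantiating Point \textbf{(II)} at a carefully chosen Stein solution.

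\medskip

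More precisely, the strategy proceeds in three steps. First, I recall the standard link between the Wasserstein distance and Stein's method: for any smooth $h \in {\rm Lip}(1)$, the Stein equation $f'(x) - x f(x) = h(x) - \mathbb{E}[h(N)]$ has a solution $f_h$ which is $C^2$ and satisfies $\Vert f_h' \Vert_\infty \le \sqrt{2/\pi} \le 1$ and $\Vert f_h'' \Vert_\infty \le 2$, so that $f_h \in \mathscr{F}_0$. Consequently, by the definition of $d_W$ in \eqref{Wass},
\begin{equation*}
d_W(F_n, N) \;\ge\; \bigl| \mathbb{E}[h(F_n)] - \mathbb{E}[h(N)] \bigr| \;=\; \bigl| \mathbb{E}[f_h'(F_n) - F_n f_h(F_n)] \bigr|.
\end{equation*}
Second, I apply Point \textbf{(II)} with $f = f_h$: dividing by $\varphi(n)$ and passing to the limit yields
\begin{equation*}
\liminf_{n\to\infty} \frac{d_W(F_n, N)}{\varphi(n)} \;\ge\; \Bigl| \tfrac{\beta}{2} + \rho\,\alpha \Bigr|\; \bigl| \mathbb{E}[f_h''(N)] \bigr|.
\end{equation*}
Third, I must exhibit a single smooth $h \in {\rm Lip}(1)$ for which $\mathbb{E}[f_h''(N)] \ne 0$; combined with the hypothesis $\rho\alpha \ne \beta/2$ (interpreted so that the constant $\beta/2+\rho\alpha$ is non-zero), this forces the right-hand side to be strictly positive, which is the definition of optimality in \eqref{e:opti} after pairing with Point \textbf{(I)}.

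\medskip

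For the required test function, a convenient concrete choice is $h(x) = -x\,e^{-x^2/2}$. A direct calculation gives $h'(x) = (x^2-1)e^{-x^2/2}$, whose extrema are at $x=0$ and $x=\pm\sqrt{3}$, so $\Vert h' \Vert_\infty = 1$ and $h \in {\rm Lip}(1)$; moreover $h$ is odd and smooth. One then checks that $f_h(x) = \tfrac{1}{2}\,e^{-x^2/2}$ solves the Stein equation, and
\begin{equation*}
\mathbb{E}[f_h''(N)] \;=\; \tfrac{1}{2}\, \mathbb{E}\bigl[(N^2-1)\,e^{-N^2/2}\bigr] \;=\; -\frac{1}{4\sqrt{2}} \;\ne\; 0,
\end{equation*}
by a standard Gaussian integral. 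With this $f_h$ plainly in $\mathscr{F}_0$, the chain of inequalities above closes the argument.

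\medskip

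The main conceptual obstacle in this plan is a subtle one: Point \textbf{(II)} produces a limit involving $\mathbb{E}[f''(N)]$, whereas the natural test functions in a Wasserstein lower bound are elements of ${\rm Lip}(1)$. The bridge between the two is the Stein equation, but one must be careful that not every $f \in \mathscr{F}_0$ arises as a Stein solution of a Lipschitz function — the map $f \mapsto f' - x f$ generally produces functions with quadratic growth. The resolution, as above, is to proceed in the \emph{opposite direction}: pick $h$ first so that it is genuinely Lipschitz, and then use that its Stein solution lies in $\mathscr{F}_0$. This reversal of perspective, together with the explicit example $h(x)=-xe^{-x^2/2}$ (which is precisely constructed so that both $h$ and $f_h$ are manageable), is the one delicate point in the proof; everything else is a direct assembly of Parts \textbf{(I)} and \textbf{(II)}.
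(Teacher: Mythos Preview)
Your proposal addresses Part \textbf{(III)} only, taking Parts \textbf{(I)} and \textbf{(II)} as established; since the paper also proves \textbf{(III)} by a short argument built on \textbf{(I)}--\textbf{(II)}, this is a fair reading of the task. Your argument is correct and follows exactly the same route as the paper: lower-bound $d_W(F_n,N)$ by a single Stein expectation $|\E[f_h'(F_n)-F_nf_h(F_n)]|$ for a well-chosen Lipschitz $h$ whose Stein solution $f_h$ lies in $\mathscr{F}_0$, then invoke Part \textbf{(II)} to identify the limit and conclude it is nonzero when $\E[f_h''(N)]\neq 0$.

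The only real difference is the choice of test function. The paper takes $h(x)=\sin x$ (so $\|h'\|_\infty,\|h''\|_\infty\le 1$ and Lemma~\ref{Stein_Lemma_Gauss}(ii) places $f_h\in\mathscr{F}_0$), and then quotes \cite[formula (5.2)]{bbnp} to obtain $\E f_h''(N)=\tfrac13\E[\sin(N)H_3(N)]=e^{-1/2}$. Your choice $h(x)=-xe^{-x^2/2}$ has the pleasant feature that the Stein solution is explicit, $f_h(x)=\tfrac12 e^{-x^2/2}$, so the membership $f_h\in\mathscr{F}_0$ and the value $\E[f_h''(N)]=-\tfrac{1}{4\sqrt 2}$ can be checked by hand without invoking an external reference. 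Both achieve the same end; your version is slightly more self-contained, at the cost of a test function whose Lipschitz constant is exactly $1$ rather than strictly less, which is harmless here. (Your parenthetical remark that the hypothesis ``$\rho\alpha\neq\beta/2$'' must be read as $\tfrac{\beta}{2}+\rho\alpha\neq 0$ is well observed; the sign in the statement is a typo in the paper.)
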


\begin{rem} { \rm
It is interesting to observe that Assumptions {\bf (II)}-(ii) and {\bf (II)}-(iv) in the statement of Theorem \ref{thm:Main1} do not have any counterpart in the results on Wiener space obtained in \cite{n-p-exact}. To see this, let $X$ denote a isonormal Gaussian process over a real separable Hilbert space $\HH$, and assume that $\{F_n : n\geq 1\}$ is a sequence of smooth functionals (in the 
 sense of Malliavin differentiability) of $X$ --- for example, each element $F_n$ 
 is a finite sum of multiple Wiener integrals. Assume that $\E(F^2_n)=1$, and write 
 $$\varphi(n):= \sqrt{\E ( 1-  \langle D F_n, - D L^{-1}F_n \rangle_{\HH} )^2}.$$ Assume that $\varphi(n)> 0$ for all $n$ and also that, as $n \to \infty$, $\varphi(n) \to 0$ and 
 the two dimensional random vector $\left( F_n, \frac{1 - \langle D F_n, -DL^{-1} F_n \rangle_{\HH}}{\varphi(n)} \right)$ converges in distribution to a centered two dimensional Gaussian vector $(N_1,N_2)$, such that 
 $\E(N^2_1) =\E(N^2_2)=1$ and $\E(N_1 N_2)= \rho \neq 0$. Then, the results of \cite{n-p-exact} imply that, for any function $f \in \mathscr{F}_W$, 
  \begin{equation}\label{optimal-Wiener}
 \frac{\E \left( f'({F}_n)  - {F}_n  f({F}_n) \right) }{\varphi(n)} \to \rho \E (f''(N)),
 \end{equation}
where, as before, $N \sim \mathscr{N}(0,1)$. This implies in particular that the sequence $\varphi(n)$ determines an optimal rate of convergence, in the sense of \eqref{e:opti}.  Also, on a Gaussian space one has that relation $(\ref{optimal-Wiener})$ extends to functions of the type $f_x$, where $f_x$ is the solution of the Stein's equation associated with the indicator 
function $\textbf{1}_{ \{ \cdot \le x\}}$ (see Section \ref{ss:stein} below): in this case the limiting value equals $ \frac{\rho}{3} (x^2 -1) \frac{e^{- \frac{x^2}{2}}}{\sqrt{2 \pi}}$.  
 }
\end{rem}

%
%
%
%
%
%


\section{Preliminaries} \label{S:normal-app}
\subsection{Poisson measures and chaos} \label{SS : PoissMeas} \setcounter{equation}{0}
As before, $\left( Z,\mathcal{Z},\mu \right) $ indicates
a Borel measure space such that $Z$ is a Borel space and $\mu $ is a
$\sigma$-finite and non-atomic Borel measure. We define the
class $\mathcal{Z}_{\mu }$ as $\mathcal{Z}_{\mu }=\left\{ B\in \mathcal{Z}%
:\mu \left( B\right) <\infty \right\} $. The symbol $\EE=\{\EE \left( B\right) :B\in \mathcal{Z}_{\mu }\}$ indicates a \textsl{compensated
Poisson random measure} on $\left( Z,\mathcal{Z}\right) $ with control $\mu
. $ This means that $\EE$ is a collection of random variables
defined on the probability space $\left( \Omega ,\mathscr{F},\P%
\right) $, indexed by the elements of $\mathcal{Z}_{\mu }$, and such that:
(i) for every $B,C\in \mathcal{Z}_{\mu }$ such that $B\cap C=\varnothing $, $%
\EE\left( B\right) $ and $\EE\left( C\right) $ are
independent, (ii) for every $B\in \mathcal{Z}_{\mu }$,$\EE(B)$ has a centered Poisson distribution with parameter $\mu \left( B\right) $. Note that properties
(i)-(ii) imply, in particular, that $\EE$ is an
\textit{independently scattered }(or \textit{completely random})
measure. Without loss of generality, we may assume that $\mathscr{F} = \sigma(\EE)$, and write $L^2(\P) :=L^2(\Omega, \mathscr{F}, \P)$. See e.g. \cite{l, pt-book} for details on the notions evoked above.

\medskip

Fix $n\geq 1.$ We denote by $L^{2}\left( \mu ^{n}\right) $ the
space of real valued functions on $Z^{n}$ that are
square-integrable with respect to $\mu ^{n}$, and we write
$L_{s}^{2}\left( \mu ^{n}\right) $ to indicate the subspace of
$L^{2}\left( \mu ^{n}\right) $ composed of symmetric functions. We also write $L^2(\mu) = L^2(\mu^1) =L_s^2(\mu^1)$. For every $f\in L_{s}^{2}(\mu ^{n})$, we denote by $I_{n}(f)$ the
\textsl{multiple Wiener-It\^{o} integral} of order $n$, of $f$ with
respect to $\EE$. Observe that, for every $m,n\geq 1$, $f\in
L_{s}^{2}(\mu ^{n})$ and $g\in L_{s}^{2}(\mu ^{m})$, one has the
isometric formula (see e.g. \cite{pt-book}):
\begin{equation}
\E\big[I_{n}(f)I_{m}(g)\big]=n!\langle f,g%
\rangle _{L^{2}(\mu ^{n})}\mathbf{1}_{(n=m)}.  \label{iso}
\end{equation}%
The Hilbert space of random variables of the type $I_n(f)$, where
$n\geq 1$ and $f\in L^2_s(\mu^n)$ is called the $n$th \textsl{Wiener
chaos}
associated with $\EE$. We also use the following standard notation: $I_{1}\left( f\right) =%
\EE\left( f\right) $, $f\in L^{2}\left( \mu \right) $;
$I_{0}\left( c\right) =c$, $c\in \mathbb{R}$. The following
proposition, whose content is known as the \textsl{chaotic
representation property} of $\EE$, is one of the crucial
results used in this paper. See e.g. \cite{pt-book}.
\begin{prop}[Chaotic decomposition] \label{P : Chaos}
Every random variable $F\in
L^2(\mathscr{F},\mathbb{P})=L^2(\mathbb{P})$ admits a (unique)
chaotic decomposition of the type
\begin{equation}\label{chaosexpansion}
F = \mathbb{E}(F) + \sum_{n\geq 1}^{\infty} I_n(f_n),
\end{equation}
where the series converges in $L^2$ and, for each $n\geq 1$, the
kernel $f_n$ is an element of $L_{s}^{2}(\mu ^{n})$.
\end{prop}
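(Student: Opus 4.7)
The plan is to prove existence and uniqueness separately, leaning essentially on the isometry formula \eqref{iso} and on a density argument for exponential functionals.

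\emph{Uniqueness.} Suppose $F$ admits two expansions $F = c + \sum_{n\ge 1} I_n(f_n) = c' + \sum_{n\ge 1} I_n(g_n)$, both converging in $L^2(\P)$. Taking expectations (and using that each $I_n(f_n)$ is centered for $n\ge 1$, which follows from \eqref{iso} by pairing with $I_0(1)=1$) immediately yields $c=c'=\E(F)$. For any fixed $m\ge 1$ and $h\in L_s^2(\mu^m)$, pair both expansions with $I_m(h)$: the isometry \eqref{iso} annihilates all terms of order $n\ne m$ and gives $m!\,\langle f_m,h\rangle_{L^2(\mu^m)} = m!\,\langle g_m,h\rangle_{L^2(\mu^m)}$. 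Since $h$ is arbitrary, $f_m = g_m$ in $L_s^2(\mu^m)$.

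\emph{Existence.} Let $\mathcal{H}\subset L^2(\P)$ be the closed linear span of $\{I_0(1)\}\cup\{I_n(f):n\ge 1,\, f\in L_s^2(\mu^n)\}$; I want to show $\mathcal{H}=L^2(\P)$. Since by assumption $\mathscr{F}=\sigma(\EE)$, a monotone class argument reduces matters to showing that every ``exponential vector''
\[
\mathcal{E}(u) := \exp\!\Big(\sum_{j=1}^k u_j\,\EE(B_j)\Big),\qquad B_1,\ldots,B_k\in\mathcal{Z}_\mu \text{ pairwise disjoint},\ u_j\in\mathbb{C},
\]
belongs to $\mathcal{H}$ (it suffices to work with a set of $u_j$ rich enough to separate $\sigma(\EE)$-measurable bounded functions and to control $L^2$-norms, e.g.\ small complex parameters). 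Because the $B_j$'s are disjoint, $\EE(B_j)$ are independent, so $\mathcal{E}(u)$ factorizes as $\prod_j \exp(u_j \EE(B_j))$. For a single $B\in\mathcal{Z}_\mu$, one expands $\exp(u\,\EE(B))$ via the generating function of the Charlier polynomials $C_n(\,\cdot\,;\mu(B))$ associated with the Poisson law of parameter $\mu(B)$, obtaining an $L^2$-convergent series whose $n$th term is (up to an explicit scalar) $I_n(\mathbf{1}_B^{\otimes n})$; multiplying $k$ such series and using a product (diagram) formula for Poisson multiple integrals, or directly re-indexing the tensor product, produces an $L^2$-convergent chaotic expansion of $\mathcal{E}(u)$ with kernels built from symmetrized tensor products of $\mathbf{1}_{B_j}$'s. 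Hence $\mathcal{E}(u)\in\mathcal{H}$.

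\emph{Main obstacle.} The delicate step is the second one: one must verify that Charlier-type expansions of $\exp(u\,\EE(B))$ converge in $L^2(\P)$ (which pins down the admissible range of $u$) and, more importantly, that the resulting chaos components truly coincide with the multiple Wiener--It\^o integrals $I_n$ as defined via simple functions and extended by \eqref{iso}. Once this identification is in place, the monotone class/density argument is routine: the family of disjoint-support exponentials is stable under products and generates $\sigma(\EE)$, so $\mathcal{H}$ contains all bounded $\sigma(\EE)$-measurable random variables, hence all of $L^2(\P)$. Combined with the uniqueness step, this gives the decomposition \eqref{chaosexpansion}.
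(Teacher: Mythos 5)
The paper itself offers no proof of this proposition: it simply cites the monograph \cite{pt-book} for the chaotic representation property of compensated Poisson random measures, so there is no ``paper route'' against which to compare your argument. What you have written is the standard textbook proof, and it is correct in outline. The uniqueness step is complete: pairing two $L^2$-convergent expansions against a test chaos $I_m(h)$ and invoking the orthogonality \eqref{iso} kills every other order and pins down each kernel (one small nit: \eqref{iso} as stated in the paper is for $m,n\ge 1$, so centeredness of $I_n(f)$ for $n\ge 1$ should rather be quoted as a basic property of the multiple Wiener--It\^o integral than derived from \eqref{iso} with $I_0(1)$). For existence, the density argument via disjoint-support exponentials and Charlier generating functions is exactly how \cite{pt-book} and its antecedents (Surgailis, Nualart--Vives) proceed. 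Two refinements worth making to your sketch: first, take $u_j$ purely imaginary, so that $\bigl|\exp\bigl(\sum_j u_j\,\EE(B_j)\bigr)\bigr|\equiv 1$, the $L^2$-convergence issue you worry about disappears, and the trigonometric exponentials already generate $\sigma(\EE)$ by a monotone class argument; second, because the $B_j$'s are pairwise disjoint, the product formula for Poisson multiple integrals produces no contraction terms, so the factorized Charlier expansions of the single factors $\exp(u_j\,\EE(B_j))$ multiply together by pure re-indexing of tensor orders, with kernels that are symmetrized tensor products of the indicators $\mathbf{1}_{B_j}$. The identification $I_n(\mathbf{1}_B^{\otimes n}) = C_n(\eta(B);\mu(B))$ that you correctly flag as the crux of the argument is the one genuinely non-trivial fact, but it is exactly what the cited reference establishes; there is no gap in the logic of your proposal.
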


\subsection{Malliavin operators} \label{SS : 4Operators}
We recall that the space
$L^2(\P;L^2(\mu)) \simeq L^2(\Omega\times Z,
\mathscr{F}\otimes \mathcal{Z}, \P\otimes \mu)$ is the space of the
measurable random functions $u : \Omega\times Z \rightarrow \R $
such that
$$
\mathbb{E}\left[\int_Z u^2_z \, \mu(\ud z)\right] <\infty.
$$
In what follows, given $f\in L^2_s(\mu^q)$ ($q\geq 2$) and $z\in Z$,
we write $f(z,\cdot)$ to indicate the function on $Z^{q-1}$ given by
$(z_1,...,z_{q-1})\rightarrow f(z,z_1,...,z_{q-1})$.
\medskip

\noindent (a) \textit{The derivative operator $D$}. The
derivative operator, denoted by $D$, transforms random variables
into random functions. Formally, the domain of $D$, written ${\rm
dom} D$, is the set of those random variables $F \in
L^2(\mathbb{P})$ admitting a chaotic decomposition
(\ref{chaosexpansion}) such that
\begin{equation}\label{condDer}
\sum_{n\geq 1}n n! \|f_n\|_{L^2(\mu^n)}^2 < \infty.
\end{equation}
If $F$ verifies (\ref{condDer}) (that is, if $F \in {\rm dom}D$),
then the random function  $z\rightarrow D_zF$ is given by
\begin{equation}\label{TheDerivative}
D_z F = \sum_{n \geq 1}n I_{n-1}(f(z,\cdot)), \,\,\, z\in Z.
\end{equation}

%
%
%
%
%
%
%
%
%
%
%
%
%
%

Plainly $DF \in L^2(\P;L^2(\mu))$, for every $F\in {\rm dom}\, D$. For every random variable of the form $F = F(\EE)$ and for every $z\in Z$, we write $F_z =F_z(\EE) = F(\EE+\delta_z)$. The following fundamental result combines classic findings from \cite{lp} and \cite[Lemma 3.1]{pth}.

\begin{lemme}\label{L : DifferenceDerivative}
For every $F\in L^2(\P)$ one has that $F$ is in ${\rm dom}\, D$ if and only if the mapping $z\mapsto (F_z-F)$ is an element of $L^2(\mathbb{P};L^2(\mu))$. Moreover, in this case one has that $D_zF = F_z-F$ almost everywhere ${\rm d}\mu\otimes {\rm d}\P$.
\end{lemme}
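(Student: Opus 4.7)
The plan is to establish the pointwise identity at the level of multiple Wiener--It\^o integrals and then extend by linearity to general $L^2$ functionals via the chaos decomposition of Proposition \ref{P : Chaos}. Specifically, the key identity to verify is
\begin{equation*}
I_n(f_n)(\EE + \delta_z) - I_n(f_n)(\EE) = n\, I_{n-1}(f_n(z, \cdot)), \qquad \mu\otimes \P\text{-a.e.},
\end{equation*}
for every $n \geq 1$ and every $f_n \in L_s^2(\mu^n)$. Given this identity and the definition \eqref{TheDerivative} of $D_z F$, the lemma follows from a term-by-term computation combined with the isometry \eqref{iso}.

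The key identity would first be checked on the dense class of symmetric kernels $f_n = \frac{1}{n!}\sum_\sigma \mathbf{1}_{B_{\sigma(1)}\times\cdots\times B_{\sigma(n)}}$, where $B_1,\dots, B_n \in \mathcal{Z}_\mu$ are pairwise disjoint. For such $f_n$, one has $I_n(f_n)=\prod_{i=1}^n \EE(B_i)$; adding a point mass $\delta_z$ shifts only the factor $\EE(B_i)$ with $z\in B_i$ by $1$, so direct computation gives
\begin{equation*}
I_n(f_n)(\EE + \delta_z) - I_n(f_n)(\EE) = \sum_{i=1}^n \mathbf{1}_{B_i}(z) \prod_{j\neq i} \EE(B_j),
\end{equation*}
which is readily recognized as $n\, I_{n-1}(f_n(z,\cdot))$. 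The identity is then extended to arbitrary $f_n \in L_s^2(\mu^n)$ by an $L^2(\mu \otimes \P)$-approximation argument, using that both sides are continuous in $f_n$ with respect to the $L^2(\mu^n)$-norm, the right-hand side via the isometry \eqref{iso} and the left-hand side via a direct Cauchy-sequence estimate on the difference operator.

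With the identity in hand, one proceeds by summing over chaoses. For $F \in L^2(\P)$ with expansion $F = \E F + \sum_{n\geq 1} I_n(f_n)$, orthogonality across chaoses, Fubini, and \eqref{iso} yield
\begin{equation*}
\E \int_Z (F_z - F)^2\, \mu(\ud z) = \sum_{n \geq 1} n^2 \int_Z \E\bigl[I_{n-1}(f_n(z,\cdot))^2\bigr] \mu(\ud z) = \sum_{n\geq 1} n\cdot n!\, \| f_n \|^2_{L^2(\mu^n)},
\end{equation*}
which is finite if and only if condition \eqref{condDer} holds, i.e.\ $F \in {\rm dom}\, D$. In that case the series $\sum_n n\, I_{n-1}(f_n(z,\cdot))$ defining $D_zF$ converges in $L^2(\mu \otimes \P)$ to $F_z - F$, hence the two coincide $\mu\otimes\P$-almost everywhere.

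The main obstacle is the measure-theoretic subtlety in defining $F_z = F(\EE + \delta_z)$ for a general $F \in L^2(\P)$: since $F$ is only defined up to a $\P$-null set, the value $F(\EE + \delta_z)$ is a priori ambiguous unless one fixes a canonical representative. The standard workaround is to define that representative through the chaos expansion itself, which then pins down $F_z$ up to $\mu\otimes\P$-null sets; with this convention, all the above manipulations are legitimate. A secondary technical point is to ensure that the $L^2(\mu^n)$-approximation in the key identity step passes to a simultaneous almost-sure limit in both the $\EE$-variable and the $z$-variable, which relies on the separability of $L_s^2(\mu^n)$ and a standard diagonal extraction together with Fubini.
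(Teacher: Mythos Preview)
The paper does not prove this lemma: it is stated as a ``fundamental result [that] combines classic findings from \cite{lp} and \cite[Lemma 3.1]{pth}'', with no argument given. Your sketch is essentially the standard route taken in those references --- verify the add-one identity on multiple integrals, then pass to general $F$ via the chaos expansion and the isometry --- so at the level of strategy there is nothing to contrast.

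One point deserves sharpening, though. You correctly flag that $F_z=F(\EE+\delta_z)$ is only defined up to the choice of a representative, but your proposed workaround --- ``define that representative through the chaos expansion itself'' --- risks collapsing the lemma into a tautology: if $F_z-F$ is \emph{defined} as $\sum_n n\,I_{n-1}(f_n(z,\cdot))$, then both the equivalence with \eqref{condDer} and the identity $D_zF=F_z-F$ are immediate by construction, and the lemma has no content. The substantive assertion (and what \cite{lp} actually proves) is that for a \emph{pathwise} representative of $F$ on the configuration space, the concrete add-one cost $\omega\mapsto F(\omega+\delta_z)-F(\omega)$ is a version of the chaotic object $D_zF$. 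Establishing this requires an additional ingredient beyond chaos-by-chaos bookkeeping --- typically the Mecke formula, which identifies $\E\int_Z g(\EE,z)\,\eta(\ud z)$ with $\E\int_Z g(\EE+\delta_z,z)\,\mu(\ud z)$ and thereby links the pathwise shift to the Fock-space picture. Your density/approximation step for the left-hand side of the key identity implicitly needs something of this kind to control $I_n(f_n)(\EE+\delta_z)$ in $L^2(\mu\otimes\P)$ before the identity is known; invoking Mecke (or working on the canonical configuration space as in \cite{lp}) closes that gap.
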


\noindent (b) \textit{The Ornstein-Uhlenbeck generator $L$}. The
domain of the {\it Ornstein-Uhlenbeck generator} (see e.g. \cite{l, lp})), written ${\rm dom}\, L$, is given by those $F \in
L^2(\mathbb{P})$ such that their chaotic expansion
(\ref{chaosexpansion}) verifies
$$
\sum_{n\geq 1}n^2 n! \|f_n\|^2_{L^2(\mu^{n})}<\infty.
$$
If $F \in {\rm dom}L $, then the random variable $LF$ is given by
\begin{equation}\label{LF}
LF = -\sum_{n\geq 1} nI_n(f_n).
\end{equation}

We will also write $L^{-1}$ to denote the pseudo-inverse of $L$. Note that $\mathbb{E}(LF)=0$, by definition. The following result
is a direct consequence of the definitions of $D$ and
$L$, and involves the adjoint $\delta$ of $D$ (with respect to the space $L^2(\P ;L^2(\mu))$ --- see e.g. \cite{l, lp} for a proof).
\begin{lemme}\label{L : deltaDL} For every $F \in {\rm dom}L$,
one has that $F \in {\rm dom}D$ and $D F$ belongs to the domain to the adjoint $\delta$ of $D$.
Moreover,
\begin{equation}\label{DdL}
\delta D F = -LF.
\end{equation}
\end{lemme}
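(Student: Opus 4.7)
The plan is to work directly from the chaotic decomposition and the definitions of $D$, $L$, and $\delta$. First I would record the inclusion ${\rm dom}\, L \subset {\rm dom}\, D$: if $F$ has the expansion $F = \E(F) + \sum_{n\geq 1} I_n(f_n)$ and satisfies $\sum_{n\geq 1} n^2\, n!\, \|f_n\|_{L^2(\mu^n)}^2 < \infty$, then a fortiori $\sum_{n\geq 1} n\, n!\, \|f_n\|_{L^2(\mu^n)}^2 < \infty$, since $n\leq n^2$ for $n\geq 1$. Hence condition \eqref{condDer} holds, so $F \in {\rm dom}\, D$ and
$$
D_z F = \sum_{n\geq 1} n\, I_{n-1}\bigl(f_n(z,\cdot)\bigr), \qquad z\in Z,
$$
with $DF\in L^2(\P;L^2(\mu))$.

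Next I would compute $\delta (DF)$ using the standard chaotic representation of the adjoint on the Poisson space (see \cite{l, lp}): if $u \in L^2(\P;L^2(\mu))$ has the expansion $u_z = \sum_{m\geq 0} I_m(g_m(z,\cdot))$, where for each $m$ the kernel $g_m$ is symmetric in its last $m$ variables, then $u\in {\rm dom}\, \delta$ if and only if the symmetrized kernels $\widetilde{g_m}$ (symmetrized in all $m+1$ arguments) satisfy $\sum_{m\geq 0}(m+1)!\,\|\widetilde{g_m}\|^2_{L^2(\mu^{m+1})} < \infty$, and in that case
$$
\delta(u) = \sum_{m\geq 0} I_{m+1}\bigl(\widetilde{g_m}\bigr).
$$
Reindexing $DF$ by $m = n-1$, one identifies $g_m(z,z_1,\ldots,z_m) = (m+1)\, f_{m+1}(z,z_1,\ldots,z_m)$. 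Since each $f_{m+1}$ is already symmetric in all of its $m+1$ variables (as an element of $L^2_s(\mu^{m+1})$), the symmetrization is trivial: $\widetilde{g_m} = (m+1)\, f_{m+1}$.

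Plugging this into the admissibility condition yields
$$
\sum_{m\geq 0}(m+1)!\,\|\widetilde{g_m}\|^2_{L^2(\mu^{m+1})} = \sum_{n\geq 1} n!\, n^2\, \|f_n\|_{L^2(\mu^n)}^2 < \infty,
$$
which is precisely the hypothesis $F\in {\rm dom}\, L$. Therefore $DF$ lies in ${\rm dom}\, \delta$, and
$$
\delta(DF) = \sum_{m\geq 0} I_{m+1}\bigl((m+1) f_{m+1}\bigr) = \sum_{n\geq 1} n\, I_n(f_n) = -LF,
$$
by the definition \eqref{LF} of $LF$. This completes the proof.

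The only genuinely non-trivial ingredient here is the explicit chaos formula for $\delta$ on the Poisson space, which I would invoke from \cite{l, lp} rather than re-derive; everything else reduces to matching chaos coefficients and the elementary inequality $n \leq n^2$ used to establish the domain inclusion. In particular, no approximation or density argument is required, because the chaotic decomposition of $F\in {\rm dom}\, L$ already supplies term-by-term $L^2$-convergence in all the series involved.
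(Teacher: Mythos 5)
Your argument is correct. The paper itself does not prove this lemma but defers to the references \cite{l,lp}, calling it ``a direct consequence of the definitions''; what you have written is precisely the standard chaos-matching argument that those references supply. The domain inclusion ${\rm dom}\,L\subset{\rm dom}\,D$ via $n\le n^2$, the identification of the kernels $g_m(z,\cdot)=(m+1)f_{m+1}(z,\cdot)$ in the chaos expansion of $DF$, the observation that no symmetrization is needed because $f_{m+1}\in L^2_s(\mu^{m+1})$, the verification that the summability condition for ${\rm dom}\,\delta$ matches exactly the defining condition for ${\rm dom}\,L$, and the final identity $\delta(DF)=\sum_{n\ge1}nI_n(f_n)=-LF$ are all correct. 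The only external input is the chaotic representation of $\delta$ (including its domain characterization), which you rightly invoke rather than re-derive; this is exactly the ingredient the paper's cited references establish.
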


\subsection{Some estimates based on Stein's method}\label{ss:stein}

We shall now present some estimates based on the use of Stein's method for the one-dimensional normal approximation. We refer the reader to the two monographs \cite{c-g-s-book, n-p-book} for a detailed presentation of the subject. Let $F$ be a random variable and let $N\sim \mathscr{N}(0,1)$, and consider a real-valued function
$h:\R\rightarrow\R$ such that the expectation $\mathbb{E}[h(X)]$
is well-defined. We recall that the \textsl{Stein equation} associated with $h$
and $F$ is classically given by
\begin{equation}\label{SteinGaussEq}
h(u)-\E[h(F)] = f'(u)-uf(u), \quad u\in\R.
\end{equation}
A solution to (\ref{SteinGaussEq}) is a function $f$ depending on
$h$ which is Lebesgue a.e.-differentiable, and such that there
exists a version of $f'$ verifying (\ref{SteinGaussEq}) for every
$x\in\R$. The following lemma gathers together some fundamental relations. Recall the notation $\mathscr{F}_W$ and $\mathscr{F}_0$ introduced in Section \ref{S:main-resut}.

\begin{lemme}\label{Stein_Lemma_Gauss}
\begin{enumerate}

\item[\rm (i)] If $h\in {\rm Lip}(1)$, then (\ref{SteinGaussEq}) has a solution $f_h$ that is an element of $\mathscr{F}_W$.

\item[\rm (ii)] If $h$ is twice continuously differentiable and $\|h'\|_\infty, \|h''\|_\infty\leq 1$, then (\ref{SteinGaussEq}) has a solution $f_h$ that is an element of $\mathscr{F}_0$.

\item[\rm (iii)] Let $F$ be an integrable random variable. Then,
$$
d_W(F,N) = \sup_{h\in {\rm Lip(1)} }\left| \E[f_h(F)F - f'_h(F)]\right| \leq \sup_{f\in \mathscr{F}_W } \left| \E[f(F)F - f'(F)]\right|.
$$

\item[\rm (iv)] If, in addition, $F$ is a centered element of ${\rm dom}\, D$, then
\begin{eqnarray}\label{e:pstu}
d_W(F,N)&\leq& \sqrt{\E \left( 1 -  \langle DF,-DL^{-1}F \rangle_{L^2(\mu)} \right)^2 }\\
&&\hspace{3.3cm}+\E \int_Z  (D_z F)^2 \times \vert D_zL^{-1} F \vert \mu(\ud z).\notag
\end{eqnarray}

%
%
%

\end{enumerate}

\end{lemme}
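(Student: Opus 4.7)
My plan handles the four parts in a single sweep, with parts (i)--(iii) being essentially classical and the real work concentrated in part (iv).

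The Stein equation (\ref{SteinGaussEq}) for a given test function $h$ has the explicit solution
$$f_h(u) = e^{u^2/2}\int_{-\infty}^u \bigl(h(y) - \E[h(N)]\bigr)e^{-y^2/2}\,dy,$$
which is the unique solution with at most sub-Gaussian growth. For $h\in{\rm Lip}(1)$, differentiating the pointwise identity $uf_h(u) = f'_h(u) - h(u) + \E[h(N)]$ and estimating the resulting Gaussian integrals yields $\|f'_h\|_\infty\le 1$ together with $|f'_h(x) - f'_h(y)| \le 2|x-y|$; when $h$ is moreover $C^2$ with $\|h'\|_\infty,\|h''\|_\infty \le 1$, an integration by parts inside the integral above produces $f_h\in C^2$ with $\|f''_h\|_\infty \le 2$. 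Both families of bounds are standard and can be quoted directly from Lemma 2.4 of \cite{c-g-s-book}; they establish (i) and (ii). Part (iii) is then immediate: substituting $u=F$ in (\ref{SteinGaussEq}) and taking expectations yields $\E[h(F)] - \E[h(N)] = \E[f'_h(F) - Ff_h(F)]$ for every $h\in{\rm Lip}(1)$, so taking absolute values and the supremum over $h\in{\rm Lip}(1)$ gives the equality in (iii), while the inequality follows because (i) shows $\{f_h : h\in{\rm Lip}(1)\}\subseteq \mathscr{F}_W$.

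For (iv), which carries the substantive computation, I start from the fact that a centered $F\in{\rm dom}\,L$ can be written $F = -LL^{-1}F = \delta(-DL^{-1}F)$, using Lemma \ref{L : deltaDL}. Combining the adjoint property of $\delta$ with the representation of the derivative operator provided by Lemma \ref{L : DifferenceDerivative} then gives
$$\E[Ff(F)] = \E\bigl[\langle -DL^{-1}F,\,Df(F)\rangle_{L^2(\mu)}\bigr] = \E\int_Z (-D_z L^{-1}F)\bigl(f(F+D_zF) - f(F)\bigr)\,\mu({\rm d} z).$$
I then Taylor-expand $f(F+D_zF) = f(F) + f'(F)D_zF + R(z)$, and use $f'\in{\rm Lip}(2)$ to estimate
$$|R(z)| = \Bigl|\int_0^1 D_zF\bigl(f'(F+tD_zF) - f'(F)\bigr)\,{\rm d} t\Bigr| \le \int_0^1 2t(D_zF)^2\,{\rm d} t = (D_zF)^2.$$
Substituting and rearranging produces
$$\E[f'(F) - Ff(F)] = \E\bigl[f'(F)\bigl(1 - \langle DF,\,-DL^{-1}F\rangle_{L^2(\mu)}\bigr)\bigr] - \E\int_Z (-D_zL^{-1}F)R(z)\,\mu({\rm d} z).$$
Bounding the first summand via $\|f'\|_\infty\le 1$ followed by Jensen's inequality delivers the first term of (\ref{e:pstu}); bounding the second summand by the Taylor remainder estimate $|R(z)| \le (D_zF)^2$ produces the second term. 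Plugging the result into the reduction supplied by (iii) finishes the proof.

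The main obstacle is bookkeeping rather than any deep analytic difficulty: the signs in the identity $F = \delta(-DL^{-1}F)$ need to be tracked carefully into the Taylor expansion, and one must justify each of the integrability steps (the use of Fubini to exchange $\E$ and $\int_Z$, and the Cauchy--Schwarz estimates showing that $\E\int_Z |D_zL^{-1}F|(D_zF)^2\mu({\rm d} z)$ and similar quantities are controlled by the norms appearing in the statement) using $F\in{\rm dom}\,D$ and Lemma \ref{L : DifferenceDerivative}. Once these are dispatched, the remaining manipulations are entirely mechanical.
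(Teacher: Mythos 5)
Your proof is correct, and the route you follow in part (iv) is exactly the one used in \cite{p-s-t-u} and in the paper's own Lemma~\ref{general-lemma1}; the paper itself disposes of this lemma almost entirely by citation (parts (i)--(ii) to \cite[Theorem~1.1]{d}, part (iv) to \cite{p-s-t-u}), so what you have written is essentially a worked-out version of the references rather than a genuinely different argument. One point worth flagging: your intermediate step ``$F = -LL^{-1}F$'' has the wrong sign --- it should read $F = LL^{-1}F$, and then $LL^{-1}F = -\delta D L^{-1}F = \delta(-DL^{-1}F)$ by Lemma~\ref{L : deltaDL}; since you land on the correct identity $F = \delta(-DL^{-1}F)$, nothing downstream is affected. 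A second small imprecision: you do not need $F \in \mathrm{dom}\,L$. The hypothesis $F \in \mathrm{dom}\,D$ (together with $\E F = 0$) already suffices, because the object that must lie in $\mathrm{dom}\,L$ is $L^{-1}F$, and $L^{-1}$ automatically maps centered elements of $L^2(\P)$ into $\mathrm{dom}\,L$; this is why the paper can state the lemma under the weaker assumption. Finally, note that your remainder estimate $|R(z)| \le (D_zF)^2$, obtained directly from $f' \in \mathrm{Lip}(2)$, is actually slightly more general than the paper's Lemma~\ref{general-lemma1} (which writes the remainder in terms of $f''$ and hence is stated only for $f \in \mathscr{F}_0$); your version covers all of $\mathscr{F}_W$ at once, which is exactly what is needed to combine with part (iii). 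The concluding sentence of your plan correctly identifies the bookkeeping that would be needed for a fully rigorous writeup (Fubini, Cauchy--Schwarz, finiteness), and if the right-hand side of \eqref{e:pstu} is infinite the bound is vacuous, so there is no real obstruction there.
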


Both estimates at Point (i) and (ii) follow e.g. from \cite[Theorem 1.1]{d}. Point (iii) is an immediate consequence of the definition of $d_W$, as well as of the content of Point (i) and Point (ii) in the statement. Finally, Point (iv) corresponds to the main estimate established in \cite{p-s-t-u}.

\section{Proof of Theorem \ref{thm:Main1}}\label{subs:proof}

We start with a general lemma. 

\begin{lemme}\label{general-lemma1}
 Let $F$ be such that $\E(F)=0$ and $F \in {\rm dom} D$. Assume that $N \sim \mathscr{N}(0,1)$. For any $f \in \mathscr{F}_{ 0}$, and $z \in Z$ we set
 \begin{equation}\label{eq:reminder}
 R^f(z):= \int_{0}^{1} f'' \left( F + (1- u) D_z F \right) u \,  \ud u.
 \end{equation}
Then,
  \begin{equation*}
 \begin{split}
  \E \left( f'(F) - F f(F) \right) & = \E \Big( f'(F) \left( 1 - \langle D F , - D L^{-1} F \rangle_{L^(\mu)} \right) \Big) \\
  & +  \E \int_Z (D_z F)^2 \times (- D_z L^{-1} F) \times R^f(z) \mu (\ud z).
 \end{split}
 \end{equation*}
\end{lemme}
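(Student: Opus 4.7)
My plan is to derive the identity via Malliavin integration by parts followed by a second-order Taylor expansion, exploiting the difference-operator structure of $D$ on the Poisson space.

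First I would rewrite $F = -\delta(DL^{-1}F)$: this follows from the relation $LG = -\delta DG$ of Lemma \ref{L : deltaDL} applied to $G = L^{-1}F$, combined with $LL^{-1}F = F$, which is valid since $\E(F)=0$. Inserting this representation into $\E(F f(F))$ and invoking the duality between $D$ and $\delta$ (with respect to $L^2(\P;L^2(\mu))$) yields
\begin{equation*}
\E(F f(F)) = \E\int_Z (-D_z L^{-1}F) \, D_z f(F) \, \mu(\ud z).
\end{equation*}

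Second, by Lemma \ref{L : DifferenceDerivative}, $D$ acts as an add-one cost operator, so $D_z f(F) = f(F + D_z F) - f(F)$. I would then apply the second-order Taylor formula with integral remainder to this increment; after the change of variable $u \mapsto 1-u$ that brings the remainder into the form of $R^f(z)$ defined in \eqref{eq:reminder}, this reads
\begin{equation*}
D_z f(F) = (D_z F) \, f'(F) + (D_z F)^2 R^f(z).
\end{equation*}
Substituting this decomposition back and splitting the integral, the linear term produces $\E[f'(F)\langle DF, -DL^{-1}F\rangle_{L^2(\mu)}]$, which combines with $\E(f'(F))$ on the left-hand side of the target identity to yield the factor $1 - \langle DF, -DL^{-1}F\rangle_{L^2(\mu)}$; the quadratic term directly produces the remainder involving $R^f$.

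The main obstacle will be Fubini and integrability. The bounds $\Vert f' \Vert_\infty \le 1$ and $\Vert f'' \Vert_\infty \le 2$ available from $f \in \mathscr{F}_0$ give $\vert R^f(z) \vert \le 1$. Moreover $DF \in L^2(\P\otimes\mu)$ since $F \in \mathrm{dom}\, D$, and $DL^{-1}F \in L^2(\P\otimes\mu)$ holds automatically, because the chaotic expansion yields $\Vert DL^{-1}F \Vert^2_{L^2(\P\otimes\mu)} = \sum_{n\ge1}(n-1)!\, \Vert f_n \Vert^2_{L^2(\mu^n)} \le \E(F^2)$. The linear-term integral is thus absolutely convergent by Cauchy–Schwarz. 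The genuinely delicate piece is the quadratic remainder $(D_z F)^2 \vert D_z L^{-1}F \vert$, which is not a priori integrable; the standard workaround is to first establish the identity for $F$ belonging to a finite sum of Wiener chaoses (where all quantities are plainly integrable and Fubini is trivial) and then extend to general $F \in \mathrm{dom}\, D$ by a density argument, using the continuity of each term with respect to convergence in $L^2(\P)$ and of $DF$ in $L^2(\P\otimes\mu)$.
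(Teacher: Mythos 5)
Your proposal follows the paper's proof almost line for line: the paper likewise writes $\E(Ff(F)) = \E(\delta(-DL^{-1}F)f(F)) = \E\langle Df(F), -DL^{-1}F\rangle_{L^2(\mu)}$ via Lemma~\ref{L : deltaDL} and duality, then uses Lemma~\ref{L : DifferenceDerivative} and the second-order Taylor formula with integral remainder to obtain $D_zf(F) = f'(F)D_zF + (D_zF)^2 R^f(z)$, and plugs this back in. Your added discussion of integrability (the bound $|R^f(z)|\le 1$, Cauchy--Schwarz for the linear term, and the density/chaos-truncation argument for the quadratic remainder) addresses a point the paper passes over in silence, but the argument is the same.
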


\begin{proof}
Using Lemma \ref{L : deltaDL} and the characterization of $\delta$ as the adjoint of $D$, we deduce that, for any $f \in \mathscr{F}_0$
\begin{equation}\label{compu1}
\begin{split}
\E(F f(F)) &= \E (L L^{-1} F f(F)) = \E (\delta(-DL^{-1} F) f(F))\\
& = \E ( \langle D f(F), - D L^{-1} F \rangle_{L^2(\mu)}).
\end{split}
\end{equation}
In view of Lemma \ref{L : DifferenceDerivative}) and of a standard application of Taylor formula, one immediately infers that
  \begin{equation}\label{compu2}
  \begin{split}
  D_z f(F)& := f(F_z) - f(F) = f'(F) D_z F +  \int_{F}^{F_z} f''(u) (F_z - u) \ud u \\
  &= f'(F) D_z F + (D_z F)^2 \times  \int_{0}^{1} f''(F + (1-u) D_z F) u \, \ud u. 
  \end{split}
 \end{equation}
Plugging $(\ref{compu2})$ into $(\ref{compu1})$, we deduce the desired conclusion.
\end{proof}

\begin{proof}[Proof of Theorem \ref{thm:Main1}] Part {\bf (I)} is a direct consequence of Lemma \ref{Stein_Lemma_Gauss}-(iv). To prove Point {\bf (II)}, we fix $f \in \mathscr{F}_0$, and use Lemma \ref{general-lemma1} to deduce that
\begin{equation*}
 \begin{split}
 \frac{\E \left( f'(F_n) - F_n f(F_n) \right)}{\varphi(n)} & = \E \left( f'(F_n) \times \frac{ 1 - \langle D F_n , - D L^{-1} F_n \rangle_{L^2(\mu)} }{\varphi(n)} \right)\\
 & \hspace{-2cm} +\frac{1}{\varphi(n)} \E \int_Z (D_z F_n)^2 \times (- D_z L^{-1} F_n) \times R_n^f(z) \mu (\ud z)\\
 &: = I_{1,n} + I_{2,n}.
 \end{split}
\end{equation*}
Assumption (iii) implies that $$\sup_{n \ge 1} \E \left( \frac{ 1 - \langle D F_n , - D L^{-1} F_n \rangle_{L^2(\mu)} }{\varphi(n)}  \right)^2 < +\infty.$$
Since $\Vert f' \Vert_\infty\le 1$ by assumption, we infer that the class
\begin{equation*}
\left\{ f'(F_n) \times \frac{ 1 - \langle D F_n , - D L^{-1} F_n \rangle_{L^2(\mu)} }{\varphi(n)} : n\geq 1 \right\}\end{equation*}
is uniformly integrable. Assumption (iii) implies therefore that, as $n \to \infty$, 
\begin{equation*}
I_{1,n} \to \E \left( f'(N_1) \times \alpha N_2 \right) = \rho \times \alpha \, \E f''(N).
\end{equation*}
To deal with the term $I_{2,n}$, first note that for each $z \in Z$, Assumptions (ii) and (iii) and Slutsky Theorem imply that, for any $u \in (0,1)$, $$F_n + (1-u) D_z F_n \stackrel{{law}}{\to} N. $$ 
Therefore, using the fact that $\Vert f'' \Vert_\infty\le 2$, and by a direct application of the dominated convergence Theorem, we infer that  

\begin{equation}\label{eq:rate-limit}
\E R^f_n(z) \to \int_0^1 \E (f''(N)) u \ud u = \frac{1}{2}\E f''(N).
\end{equation}
At this point, Assumption (iv) and the triangle inequality immediately imply that, in order to obtain the desired conclusion, it is sufficient to prove that, as $n\to \infty$,
\begin{equation}\label{eq:last-step}
\frac{1}{\varphi(n)} \int_Z u_n(z) \times \left\{ \E R^f_n(z) -   \frac{1}{2} \E f''(N) \right\} \mu (\ud z)  \to 0.
\end{equation}
To show $(\ref{eq:last-step})$, it is enough to prove that the function integrated on the right-hand side is uniformly integrable: this is 
straightforward, since $\vert R^f_n(z) -   \frac{1}{2} \E f''(N) \vert \le 2$, and of the fact that the sequence $n\mapsto \varphi(n)^{-(1+\epsilon)} \int_Z u_n(z)^{1+\epsilon} \mu (\ud z)$ is bounded.
In view of the first equality in Lemma \ref{Stein_Lemma_Gauss}-(iii), to prove the remaining Point {\bf (III)} in the statement, it is enough to show that there exists a function $h$ such that $\|h'\|_\infty ,\,  \|h''\|_\infty \leq 1$, and $\E[f''_h(N)]\neq 0$. Selecting $h(x) = \sin x$, we deduce from \cite[formula (5.2)]{bbnp} that $\E f''_h(N) = 3^{-1}\E[\sin(N)H_3(N)] = e^{-1/2}>0$, thus concluding the proof.
\end{proof}

\section{Applications to $U$-statistics}\label{S:applications}

\subsection{Preliminaries}\label{ss:premU}

In this section, we shall apply our main results to the following situation:
\begin{enumerate}
\item[--] $\EE$ is a compensated Poisson measure on the product space $( \R_+\!\times\! Z, \mathscr{B}(\R_+)\!\otimes\hspace{-1.1mm} \mathcal{Z})$ (where $(Z,\mathcal{Z})$ is a Borel space) with control measure given by 
\begin{equation}\label{e:controlnu}
\nu := \ell\times \mu,
\end{equation} 
with $\ell({\rm d} x) = {\rm d} x$ equal to the Lebesgue measure and $\mu$ equal to a $\sigma$-finite Borel measure with no atoms.

\item[--] For every $n\geq 1$, we set $\EE_n$ to be the Poisson measure on $(Z,\mathcal{Z})$ given by the mapping $A\mapsto \EE_n(A) : = \EE([0,n]\times A)$ ($A\in \mathcal{Z}_\mu$), in such a way that $\EE_n$ is a Poisson measure on $(Z,\mathcal{Z})$, with intensity $\mu_n := n\times \mu$.

\item[--] For every $n$, the random variable $F_n$ is a $U$-statistic of order $2$ with respect to the Poisson measure $\eta_n := \EE_n +\mu_n $, in the sense of the following definition.

\end{enumerate}

\begin{definition}{\rm
A random variable $F$ is called a $U$-{\it statistic} of order $2$, based on the Poisson random measure $\eta_n$ defined above, if there exists a kernel $h \in L^1_s(\mu^2)$ (that is, $h$ is symmetric and in $L^1_s(\mu^2)$, such that
\begin{equation}\label{U-statistics}
 F = \sum_{(x_1,x_2) \in \eta^2_{n, \neq}} h (x_1,x_2),
\end{equation}
where the symbol $\eta^2_{n,\neq}$ indicates the class of all $2$-dimensional vectors $ (x_1,x_2)$ such that $x_i $ is in the support of $\eta_n$ ($i=1,2$) and $x_1 \neq x_2$.}
\end{definition}

We recall that, according to the general results proved in \cite[Lemma 3.5 and Theorem 3.6]{r-s}, one has that, if a random variable $F$ as in \eqref{U-statistics} is square-integrable, then necessarily 
$h\in L^2_s(\mu^2)$, and $F$ admits a representation of the form 
\begin{equation}\label{e:z}
F = \E(F) +  F_1 + F_2 := \E(F) + I_1 (h_1) + I_2 (h_2),
\end{equation}
where $I_1$ and $I_2$ indicate (multiple) Wiener-It\^o integrals of order 1 and 2, respectively, with respect to $\EE$, and 
\begin{eqnarray}\label{kernels}
h_1(t,z)  &:=& 2{\bf 1}_{[0,n]}(t) \int_{Z} \, h(a,z)\, \mu_n({\rm d} a) \\
&=& 2{\bf 1}_{[0,n]}(t) \int_{\R_+\times Z}  {\bf 1}_{[0,n]}(s)h(a,z) \, \nu ({\rm d} s, {\rm d} a)\notag \\ 
&=& 2{\bf 1}_{[0,n]}(t) n \int_{Z} \, h(a,z)\, \mu({\rm d} a)\in L^2(\mu),\notag\\
\quad h_2\big((t_1,x_1), (t_2,x_2)\big)& :=&  {\bf 1}_{[0,n]^2}(t_1,t_2) h(x_1, x_2), \label{kernels2}
\end{eqnarray}
where $\nu$ is defined in \eqref{e:controlnu}.

\subsection{Edge-counting in random geometric graphs}\label{ss:edge}

Let the framework and notation of Section \ref{ss:premU} prevail, set $Z = \R^d$, and assume that $\mu$ is a probability measure that is absolutely continuous with respect to the Lebesgue measure, with a density $f$ that is bounded and everywhere continuous. It is a standard result that, in this case, the non-compensated Poisson measure $\eta_n$ has the same distribution as the point process
$$
A\mapsto \sum_{i=1}^{N_n} \delta_{Y_i}(A),\quad A\in \mathscr{B}(\R^d),
$$ 
where $\delta_y$ indicates the Dirac mass at $y$, $\{Y_i :i\geq 1\}$ is a sequence of i.i.d. random variables with distribution $\mu$, and $N_n$ is an independent Poisson random variable with mean $n$. Throughout this section, we consider a sequence $\{t_n: n\geq 1\}$ of strictly positive numbers decreasing to zero, and consider the sequence of kernels $\{h_n : n\geq 1\}$ given by
$$
h_n : \R^d\times \R^d \to  \R_+ : (x_1,x_2) \mapsto h_n(x_1,x_2) := \frac12 {\bf 1}_{\{0<\|x_1-x_2\|\leq t_n\}},
$$
where, here and for the rest of the section, $\| \cdot \|$ stands for the Euclidean norm in $\R^d$. Then, it is easily seen that, for every $n$, the $U$-statistic 
\begin{equation}\label{edges}
 F_n := \sum_{(x_1,x_2) \in \eta^2_{n, \neq}} h_n (x_1,x_2),
\end{equation}
equals the number of edges in the random geometric graph $(V_n, E_n)$ where the set of vertices $V_n$ is given by the points in the support of $\eta_n$, and $\{x,y\}\in E_n$ if and only if $0<\|x-y\|\leq t_n$ (in particular, no loops are allowed).

We will now state and prove the main achievement of the section, refining several limit theorems for edge-counting one can find in the literature (see e.g. \cite{bp, rey-p-1, rey-p-2, penrosebook, r-s, r-s-t} and the references therein). Observe that, quite remarkably, the conclusion of the forthcoming Theorem \ref{t:maingraph} is independent of the specific form of the density $f$.For every $d$, we denote by $\kappa_d$ the volume of the ball with unit radius in $\R^d$.

\begin{thm}\label{t:maingraph} Assume that $nt_n^d\to \infty$, as $n\to \infty$. 

\begin{enumerate}

\item[\rm (a)] As $n\to\infty$, one has the exact asymptotics
\begin{equation}\label{e:as}
\E(F_n)\sim \frac{\kappa_d\, n^2\, t_n^d}{2}  \int_{\R^d} f(x)^2 {\rm d}x, \quad {\rm Var}(F_n)\sim \frac{\kappa^2_d\, n^3(t_n^d)^{2}}{4}\, \int_{\R^d} f(x)^3 {\rm d}x.
\end{equation}

\item[\rm (b)] Define 
$$
\tilde{F}_n := \frac{F_n - \E F_n}{\sqrt{{\rm Var}(F_n)}}, \quad n\geq 1,
$$ 
and let $N\sim \mathscr{N}(0,1)$. Then, there exists a constant $C\in (0, \infty)$, independent of $n$, such that 
\begin{equation}\label{e:swag}
d_W(\tilde{F}_n, N)\leq \varphi(n):= Cn^{-1/2}.
\end{equation}

\item[\rm (c)] If moreover $ n\, (t_n^d)^3 \to \infty$, then there exists a constant $0<c<C$ such that, for $n$ large enough,
$$
c\, n^{-1/2} \leq d_W(\tilde{F}_n, N) \leq C\,  n^{-1/2},
$$ 
and the rate of convergence induced by the sequence $\varphi(n) = C\, n^{-1/2}$ is therefore optimal.
\end{enumerate}
\end{thm}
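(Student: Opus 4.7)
Part (a) is routine: I would compute $\E F_n$ via Mecke's formula, extract the factor $t_n^d$ by the change of variables $y = x + t_n u$ with $\|u\|\le 1$, and conclude by the continuity and boundedness of $f$ together with dominated convergence. For the variance, I read the chaos decomposition $F_n - \E F_n = I_1(h_1)+I_2(h_2)$ off $(\ref{e:z})$-$(\ref{kernels2})$, apply the isometry $(\ref{iso})$ to get $\mathrm{Var}(F_n) = \|h_1\|^2_{L^2(\nu)} + 2\|h_2\|^2_{L^2(\nu^2)}$, and observe that the hypothesis $nt_n^d\to\infty$ makes the first norm (of order $n^3t_n^{2d}$) dominate the second (of order $n^2t_n^d$).

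For part (b), set $\sigma_n^2 = \mathrm{Var}(F_n)$ and apply Theorem \ref{thm:Main1}{\rm (I)} to $\tilde F_n$, so that it suffices to show $\varphi_1(n)+\varphi_2(n)\le C n^{-1/2}$. From $-L^{-1}I_k(g) = \frac{1}{k}I_k(g)$ and $D_z I_k(g) = kI_{k-1}(g(z,\cdot))$ one obtains $D_z F_n = h_1(z)+2I_1(h_2(z,\cdot))$ and $-D_z L^{-1}F_n = h_1(z)+I_1(h_2(z,\cdot))$. Expanding $\langle DF_n, -DL^{-1}F_n\rangle_{L^2(\nu)}$ by the Poisson product formula produces a deterministic piece equal to $\sigma_n^2$ plus a linear combination of Wiener-It\^o integrals of orders one and two whose kernels are explicit contractions of $h_1$ and $h_2$. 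A rescaling computation in the spirit of part (a) yields $\mathrm{Var}(\sigma_n^{-2}\langle DF_n,-DL^{-1}F_n\rangle) \le C n^{-1}$, hence $\varphi_1(n)\le C n^{-1/2}$; the bound $\varphi_2(n)\le C n^{-1/2}$ follows in parallel by exploiting the non-negativity of $h_n$ and of $D_zF_n$ to control the absolute value $|D_zL^{-1}F_n|$.

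For part (c), I would verify the four assumptions of Theorem \ref{thm:Main1}{\rm (III)} with $\varphi(n) = n^{-1/2}$. Assumption (i) is immediate, and Assumption (ii) follows from $\E (D_z \tilde F_n)^2 \le \sigma_n^{-2}\bigl(h_1(z)^2 + 4\|h_2(z,\cdot)\|_{L^2(\nu)}^2\bigr) = O\bigl((n t_n^d)^{-1}\bigr)\to 0$. The crux is Assumption (iii), namely the joint weak convergence
$$
\bigg(\tilde F_n,\ \frac{1 - \langle D\tilde F_n, -DL^{-1}\tilde F_n\rangle_{L^2(\nu)}}{n^{-1/2}}\bigg) \stackrel{\mathrm{law}}{\longrightarrow} (N_1,\alpha N_2)
$$
to a non-degenerate centred Gaussian vector with \emph{non-zero} correlation $\rho$. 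The second coordinate is, up to lower-order contributions, a fixed linear combination of first- and second-order multiple integrals with kernels obtained from $h_1,h_2$ by contraction; I would therefore invoke a multivariate CLT for Poisson chaoses in the spirit of \cite{p-s-t-u, rey-p-1, rey-p-2}, whereby the required contraction norms and limiting covariances reduce to scaling integrals that are evaluated by the same change-of-variables as in part (a). The stronger hypothesis $nt_n^{3d}\to\infty$ is precisely what makes these limits finite, strictly positive and non-degenerate. Assumption (iv) is handled by taking $u_n(z)$ to be the deterministic leading part of $\sigma_n^{-3}\E[(D_zF_n)^2 (-D_z L^{-1}F_n)]$ computed pointwise in $z$, which scales like $n^{-1/2}$ times a bounded, explicit function of $f(z)$ and the $\kappa_d$-constants.

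The principal obstacle is Assumption (iii) together with the non-cancellation requirement $\rho\alpha \neq \beta/2$: both $\rho\alpha$ and $\beta/2$ emerge as cubic integrals against $f^3$ weighted by products of $\kappa_d$-factors, and I must verify that their difference does not vanish for \emph{any} admissible density $f$. This is a genuinely Poisson phenomenon with no Gaussian analogue, as highlighted in the remark following Theorem \ref{thm:Main1}: here $\beta/2$ is produced by the ``diagonal'' term inherent to $\varphi_2(n)$, and the argument that it cannot exactly cancel the cross-covariance contribution $\rho\alpha$ coming from $\varphi_1(n)$ is what ultimately forces the regime $nt_n^{3d}\to\infty$ rather than the weaker $nt_n^d \to \infty$ used in part (b).
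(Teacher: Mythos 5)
Your overall architecture matches the paper's proof: part (a) via Mecke/isometry and part (b) by citing the Poisson Stein--Malliavin bound, with the real work being the verification of Assumptions (ii)--(iv) of Theorem~\ref{thm:Main1}-(II) for $\tilde F_n$ with $\varphi(n)\asymp n^{-1/2}$, using contractions of $h_{1,n}, h_{2,n}$ and a multivariate CLT for Poisson chaoses (the paper uses \cite{p-z-multi}).

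There is, however, a genuine gap at exactly the point you flag as the ``principal obstacle.'' You treat the non-cancellation $\rho\alpha\neq\beta/2$ as a delicate analytic fact that ``must be verified not to vanish for any admissible density $f$,'' and you further claim that enforcing it is ``what ultimately forces the regime $nt_n^{3d}\to\infty$.'' Neither is right. Once the limits are computed, one finds (with the paper's notation)
\[
\rho\alpha = -\frac{12}{C}\,\frac{\int f^4}{(\int f^3)^{3/2}}<0, \qquad
\frac{\beta}{2} = \frac{4}{C}\,\frac{\int f^4}{(\int f^3)^{3/2}}>0,
\]
so one quantity is always strictly negative and the other always strictly positive; they are in fact proportional with a fixed sign difference, and cancellation is impossible for \emph{every} density $f$ (and indeed both involve $\int f^4$, not $\int f^3$ as you write). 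So this step is not the obstacle you fear --- but it does require you to actually carry out the computation with the correct numerical prefactors (the $3, 5, 8$ coming out of the product formula in $\langle D\tilde F_n,-DL^{-1}\tilde F_n\rangle$ and in $\int (D\tilde F_n)^2(-DL^{-1}\tilde F_n)$), which your plan leaves unaddressed. The role of the stronger hypothesis $n(t_n^d)^3\to\infty$ is also misattributed: it has nothing to do with the sign of $\rho\alpha-\beta/2$, it is what makes the remainder terms (e.g.\ the analogues of $X_{2,n},X_{3,n}$ in the expansion of the carr\'e-du-champ, and the cross terms $B_{2,n},B_{3,n}$ in the $\varphi_2$-type quantity) negligible at scale $n^{-1/2}$, and what makes $\int u_n/\varphi(n)\to\beta$ with $u_n=h_{1,n}^3/\mathrm{Var}(F_n)^{3/2}$ well-defined and the associated $L^{1+\epsilon}$ bound hold. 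In short: your plan is structurally sound, but finishing it requires the explicit contraction-norm asymptotics (the analogues of \eqref{e:a1}--\eqref{e:a9}) and the resulting numerical values of $\alpha,\rho,\beta$; once those are in hand the non-degeneracy is immediate by a sign argument, not subtle.
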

\begin{proof} The two asymptotic results at Point (a) follow from \cite[Proposition 3.1]{penrosebook} and \cite[formula (3.23)]{penrosebook}, respectively. Point (b) is a special case of the general estimates proved in \cite[Theorem 3.3]{rey-p-2}. In order to prove Point (c) it is therefore sufficient to show that the sequence $\tilde{F}_n$ verifies Assumptions (ii), (iii) and (iv) of Theorem \ref{thm:Main1}-{\bf (II)}, with respect to the control measure $\nu$ defined in \eqref{e:controlnu}, and with values of $\alpha,\,  \beta$ and $\rho$ such that $\alpha\, \rho \neq \beta/2$. First of all, in view of \eqref{e:z}, one has that, a.e. ${\rm d}t\otimes \mu( {\rm d} z)$,
\begin{equation*}
D_{t,z} \tilde{F}_n = \frac{1}{\sqrt{{\rm Var}(F_n)}} \left\{h_{1,n}(t,z) + 2 I_1 (h_{2,n} (t,z,\cdot)) \right\},
\end{equation*}
where the kernels $h_{1,n}$ and $h_{2,n}$ are obtained from \eqref{kernels} and \eqref{kernels2}, by taking $h=h_n$. Since $h_{1,n}(t,z) =2\, {\bf 1}_{[0,n]}(t)  n \int_{\R^d} h(z,a) \mu({\rm d} a)$, we obtain that $\text{Var}(F_n)^{-\frac{1}{2}} \times h_{1,n}(t,z) = O (( t_n^{2d} n)^{-\frac{1}{2}}) \to 0$, as $n \to \infty$. Also, using the isometric properties of Poisson multiple integrals,
\begin{equation*}
 \E \Big( \frac{I_1 (h_{2,n} ((t,z),\cdot))}{\sqrt{\text{Var}(F_n)}} \Big)^2 \leq  \frac{ n \int_{\R^d} h_n (z,x) \mu (\ud x)}{\text{Var}(F_n)} = O (( nt_n^d)^{-2}) \to 0.
\end{equation*}
It follows that $D_{t,z} \tilde{F}_n$ converges in probability to zero for ${\ud}\nu$-almost every $(t,z)\in \R_+\times \R^d$, and Assumption (ii) of Theorem \ref{thm:Main1}-{\bf (II)} is therefore verified. In order to show that Assumption (iii) in Theorem \ref{thm:Main1}-{\bf (II)} also holds, we need to introduce three (standard) auxiliary kernels:
\begin{eqnarray*}
h_{2,n}\star_2^1 h_{2,n}(t,x) &:=& {\bf 1}_{[0,n]}(t) n \int_{\R^d} h_n^2(x,a) \mu({\rm d} a)\\
h_{2,n}\star_1^1 h_{2,n}((t,x), (s,y)) &:=& {\bf 1}_{[0,n]}(t){\bf 1}_{[0,n]}(s) n \int_{\R^d} h_n(x,a)h_n(y,a) \mu({\rm d} a)\\
h_{1,n}\star_1^1 h_{2,n}(t,x)&:=& 2\,  {\bf 1}_{[0,n]}(t)n^2 \int_{\R^d} \int_{\R^d} h_n(a,y)h_n(x,y) \mu({\rm d} a)\mu({\rm d} x).
\end{eqnarray*}
The following asymptotic relations (for $n\to \infty$) can be routinely deduced from the calculations contained in \cite[Proof of Theorem 3.3]{rey-p-2} (recall that the symbol `$\sim$' indicates an exact asymptotic relation, and observe moreover that the constant $C$ is the same appearing in \eqref{e:swag}):
\begin{eqnarray}
\| h_{2,n} \star_2^1 h_{2,n}\|^2_{L^2(\nu)} &=& O(n^3 (t_n^d)^2) \label{e:a1} \\
\| h_{2,n} \star_1^1 h_{2,n}\|^2_{L^2(\nu^2)} &=& O(n^4 (t_n^d)^3)\label{e:a2}\\
\| h_{1,n} \star_1^1 h_{2,n}\|^2_{L^2(\nu)} &\sim & \frac{\kappa_d^4 n^5(t_n^d)^4}{4}\int_{\R^d} f(x)^5 {\rm d} x \label{e:a3}\\
\langle h_{1,n} , h_{1,n} \star_1^1 h_{2,n}\rangle _{L^2(\nu)} &\sim & \frac{\kappa_d^3 n^4(t_n^d)^3}{2}\int_{\R^d} f(x)^4 {\rm d} x \label{e:a4}\\
\varphi(n){\rm Var}(F_n) &\sim& C\frac{\kappa_d^2 n^{5/2}(t_n^d)^2 }{4} \int_{\R^d}  f(x)^3{\rm d} x\label{e:a5}\\
\| h_{1,n} \star_1^1 h_{2,n}\|^3_{L^3(\nu)} &=& O(n^7 (t_n^d)^6)\label{e:a6}\\
\| h_{1,n}\|^3_{L^3(\nu)} &\sim& \kappa_d^3 n^4 (t_n^d)^3 \int_{\R^d} f(x)^4 \ud x \label{e:a7}\\
\| h_{1,n}\|^4_{L^4(\nu)} &=& O(n^5 (t_n^d)^4)\label{e:a8}\\
\| h_{2,n}\|^2_{L^2(\nu)} &=& O(n^2 (t_n^d))\label{e:a9}.
\end{eqnarray}
Using the fact that, by definition, $L^{-1} Y = -q^{-1} Y $ for every random variable $Y$ living in the $q$th Wiener chaos of $\EE$, we deduce that (using the control measure $\nu$ defined in \eqref{e:controlnu})
\begin{equation*}
 \begin{split}
  \langle D \tilde{F}_n  & , - D L^{-1} \tilde{F}_n \rangle_{L^2(\nu)}  = \frac{1}{ \text{Var}(F_n)} \Big\{ \int_{\R_+\times \R^d} h^2_{1,n}(t,z) \nu (\ud t, \ud z)\\
  & \!\!\!+ 3 \int_{\R_+\times \R^d} \!\!h_{1,n}(t, z) I_1 (h_{2,n} ((t,z),\cdot)) \nu (\ud t, \ud z)  + 2 \int_{\R_+\times \R^d} I^2_1 ( h_{2,n} ((t,z),\cdot)) \nu (\ud t, \ud z) \Big\}.
 \end{split}
\end{equation*}
Using a standard multiplication formula for multiple Poisson integrals (see e.g. \cite[Section 6.5]{pt-book}) on the of the previous equation, one deduces that
\begin{equation*}
 \begin{split}
  &\frac{ \langle D \tilde{F}_n , - D L^{-1}\tilde{F}_n \rangle_{L^2(\nu)} - 1}{\varphi(n)}  \\
  &=\frac{1}{\varphi(n)\text{Var}(F_n)} \Big\{ 3 I_1 ( h_{1,n} \star^1_1 h_{2,n}) + 2 I_1 (h_{2,n} \star^1_2 h_{2,n}) + 2 I_2 (h_{2,n} \star^1_1 h_{2,n}) \Big\} \\
  &:= X_{1,n} + X_{2,n} + X_{3,n}.
 \end{split}
\end{equation*}
Now, in view of \eqref{e:a1}, \eqref{e:a2} and \eqref{e:a5}, one has that, as $n\to\infty$,
\begin{equation*}
 \E (X_{2,n}^2) =  O ((nt_n^d)^{-2}) \to 0, \quad \text{and}\quad  \E (X_{3,n}^2) =  O ((nt_n^d)^{-1})\to 0. 
\end{equation*}
Also, \eqref{e:a3} yields that
\begin{equation*}
 \begin{split}
 \E (X_{1,n}^2) \longrightarrow \frac{9\int_{\R^d} f(x)^5 {\ud x} }{C^2(\int_{\R^d} f(x)^3 {\ud x})^2}: = \alpha^2  >  0.
 \end{split}
\end{equation*}
Finally, in view of \eqref{e:a6} and \eqref{e:a7},
\begin{equation*}
\Big\Vert \frac{h_{1,n} \star^1_1 h_{2,n}}{\varphi(n) \text{Var}(F_n)} \Big\Vert^3_{L^3(\nu)}, \Big\Vert \frac{h_{1,n} }{\varphi(n) \text{Var}(F_n)} \Big\Vert^3_{L^3(\nu)} = O(n^{- \frac{1}{2}} )\to 0, \quad \mbox{and} \and 
\end{equation*}
A standard application of \cite[Corollary 3.4]{p-z-multi} now implies that, as $n\to\infty$
\begin{equation}\label{e:cc}
\Big(\frac{I_1(h_{1,n})}{\sqrt{\text{Var}(F_n)}} , -\frac{I_1(3 h_{1,n}\star^1_1 h_{2,n})}{\varphi(n) \text{Var}(F_n)}\Big) \stackrel{\text{law}}{\to} (Z_1,Z_2),
\end{equation}
where $Z_1 \sim \mathscr{N}(0,1)$ and $Z_2 \sim \mathscr{N}(0,\alpha^2)$ are two jointly Gaussian random variables such that
\begin{equation*}
 \begin{split}
 \rho' := \E(Z_1Z_2) = -\lim_n \frac{3}{\varphi(n) \text{Var}^{\frac{3}{2}}(F_n)} \langle h_{1,n}, h_{1,n}\star^1_1 h_{2,n} \rangle_{L^2(\nu)} = -\frac{12}{C}\frac{\int_{\R^d} f(x)^4 {\rm d} x}{\left( \int_{\R^d} f(x)^3 {\rm d} x\right)^{3/2}}.
  \end{split}
\end{equation*}
Now, since relation \eqref{e:a9} implies that ${\rm Var}(F_n)^{-1/2} I_2(h_{2,n})$ converges to zero in probability, we deduce that the sequence 
$$
\left( \tilde{F}_n, \frac{ 1-\langle D \tilde{F}_n , - D L^{-1}\tilde{F}_n \rangle_{L^2(\nu)} }{\varphi(n)}\right), \quad n\geq 1,
$$
converges necessarily to the same limit as the one appearing on the RHS of \eqref{e:cc}. We therefore conclude that Assumption (iii) in Theorem \ref{thm:Main1}-{\bf (II)} is verified with $\alpha$ defined as above, and $\rho := \rho'/\alpha$. To conclude the proof, we will now show that Assumption (iv) in Theorem \ref{thm:Main1}-{\bf (II)} is satisfied for 
\begin{equation}\label{e:betas}
u_n = h_{1,n}^3\times {\rm Var}(F_n)^{-3/2} \quad \mbox{and}\quad \beta = \frac{8}{C}\frac{\int_{\R^d} f(x)^4 {\rm d} x}{\left( \int_{\R^d} f(x)^3 {\rm d} x\right)^{3/2}}.
\end{equation}
To see this, we use again a product formula for multiple stochastic integrals to infer that
\begin{equation*}
 \begin{split}
  \frac{1}{ \varphi(n)} & \E  \int_{\R_+\times \R^d} (D_{t,z} \tilde{F}_n)^2 \times (- D_{t,z} L^{-1} \tilde{F}_n) \times R^f_n(t,z) \, \nu (\ud t, \ud z)\\
  & =   \frac{ (\text{Var}(F_n))^{- \frac{3}{2}}}{ \varphi(n)} \Big\{ \E \int_{\R_+\times \R^d}  h^3_{1,n}(t, z) \times R^f_n(t,z) \, \nu (\ud t, \ud z)\\
  &+ 5 \E \int_{\R_+\times \R^d} h^2_{1,n}(t,z) \, I_1 (h_{2,n} ((t,z),\cdot)) \times R^f_n(t,z) \, \nu (\ud t, \ud z) \\
  &+ 8 \E \int_{\R_+\times \R^d} h_{1,n}(t,z) \times I_1 (h_{2,n}((t,z),\cdot))^2 \times R^f_n(t, z) \, \nu (\ud t, \ud z) \Big\}\\
  & := B_{1,n}+B_{2,n}+B_{3,n}.
 \end{split}
\end{equation*}
Since relations \eqref{e:a7} and \eqref{e:a8} imply that, as $n\to\infty$ and using the notation \eqref{e:betas},
$$
\int_{\R_+\times \R^d} \frac{u_n(t,z)}{\varphi(n)} \, \nu(\ud t, \ud z)\to \beta, \,\ \mbox{and} \,\, \int_{\R_+\times \R^d} \left (\frac{u_n(t,z)}{\varphi(n)}\right)^{4/3} \, \nu(\ud t, \ud z) = O(n^{-1/3}),
$$
the proof is concluded once we show that $\E B_{2,n}, \,  \E B_{3,n} \to 0$. In order to deal with $B_{2,n}$, we use the fact that $|R_n^f|\leq 1$, together with the Cauchy-Schwarz and Jensen inequalities and the isometric properties of multiple integrals, to deduce that
\begin{eqnarray*}
&&\left|  \E \int_{\R_+\times \R^d} h^2_{1,n}(t,z) \, I_1 (h_{2,n} ((t,z),\cdot)) \times R^f_n(t,z) \, \nu (\ud t, \ud z)\right|\\
&& \leq n^{7/2} \int_{\R^d} \left (\int_{\R^d} h_n(z,a)  \mu(\ud a)\right)^2 \sqrt{ \int_{\R^d} h^2_n(z,a)  \mu(\ud a)  } \, \mu(\ud z)\\
&& \leq n^{7/2} \sqrt{ \int_{\R^d} \left (\int_{\R^d} h_n(z,a)  \mu(\ud a)\right)^2 \left( \int_{\R^d} h^2_n(z,a)  \mu(\ud a) \right) \, \mu(\ud z) } \\
&& = O(n^{7/2}(t_n^d)^{3/2}).
\end{eqnarray*}
Since we work under the assumption that $n(t_n^d)^3\to\infty$, this yields that, as $n\to \infty$, $\E B_{2,n} = O(n^{-1/2}(t_n^d)^{-3/2})\to 0$. Analogous computations yield that $\E B_{3,n} = O((n\, t_n^d)^{-1})\to 0$, and this concludes the proof.
\end{proof}

\subsection{Geometric $U$-statistics of order $2$}\label{subs:U-statistic}
Our approach is robust enough for extending to more general classes of $U$-statistics. In order to see this, let the framework and notation of Section \ref{ss:premU} prevail, and consider a sequence of {\it geometric} $U$-statistics of order $2$ given by
\begin{equation}
\label{eq:Ustat2}
F_n := \sum_{(x_1,x_2) \in \eta^2_{n, \neq}} h (x_1,x_2), \qquad h \in L^1(\mu^2) \cap L^2(\mu^2).
\end{equation}
A proof similar to that of Theorem \ref{t:maingraph} yields the following statement. (Note that items {\rm (a)} and {\rm (b)} in the forthcoming theorem are a consequence of \cite[Theorem 7.3]{rey-p-2}, as well \cite{r-s}).

\begin{thm}\label{thm:Ustat2}
Assume that the kernels $h_{1,n}$ and $h_{2,n}$ are given by the RHS of $(\ref{kernels})$ and $(\ref{kernels2})$.
\begin{enumerate}
\item[\rm (a)] Let $h_1(z):= \int_Z h(x,z) \mu(\ud x)$, for every $z \in Z$. If $\Vert h_1\Vert_{L^2(\mu)} > 0$, then as $n \to \infty$, one has the exact asymptotic 
\begin{equation*}
{\rm Var} (F_n) \sim { \rm Var} (F_{1,n}) \sim \Vert h_1\Vert^2_{L^2(\mu)} \, n^3.
\end{equation*}
\item[\rm (b)] Define 
$$
\widetilde{F}_n := \frac{F_n - \E F_n}{\sqrt{{\rm Var}(F_n)}}, \quad n\geq 1.
$$ 
Set $\tilde{h}_{1,n} = \left({\rm Var}(F_{1,n}) \right)^{- \frac{1}{2}} h_{1,n}$, and 
$\widetilde{\varphi}(n):= \Vert \tilde{h}_{1,n} \Vert^3_{L^3(\mu_n)} = \Vert h_1 \Vert^3_{L^3(\mu)} \times \Vert h_1\Vert^{-3}_{L^2(\mu)} \times n^{- \frac{1}{2}}$. Let $N\sim \mathscr{N}(0,1)$. Then, there exists a 
constant $C\in (0, \infty)$, independent of $n$, such that 
\begin{equation}\label{e:swag}
d_W(\widetilde{F}_n, N)\leq C \, \widetilde{\varphi}(n).
\end{equation}
\item[\rm (c)] Let $h_2(x_1,x_2):= h(x_1,x_2)$, and denote 
\begin{align}\label{variance-corrolation}
\alpha^2_{h_1,h_2}& := \frac{9 \Vert h_1 \Vert^2_{L^2(\mu)} \, \Vert h_1 \star^1_1 h_2 \Vert^2_{L^2(\mu)}}{ \Vert h_1 \Vert^6_{L^3(\mu)}}, \quad \text{and} \\
\rho_{h_1,h_2} & := - \frac{ 3 \langle h_1, h_1 \star^1_1 h_2 \rangle_{L^2(\mu)}}{ \Vert h_1 \Vert^{3}_{L^3(\mu)}  }.
\end{align}
If moreover $h(x_1,x_2)\ge 0$, for $(x_1,x_2)\in Z^2$ a.e. $\mu^2$, and also $\alpha_{h_1,h_2} \times \rho_{h_1,h_2} \neq - 1/2$, then there exists a constant $0<c<C$ such that, for $n$ large enough,
$$
c\, \widetilde{\varphi}(n) \leq d_W(\widetilde{F}_n, N) \leq C\,  \widetilde{\varphi}(n),
$$ 
and the rate of convergence induced by the sequence $\varphi(n):= C\, \widetilde{\varphi}(n)$ is therefore optimal.
\end{enumerate}

\end{thm}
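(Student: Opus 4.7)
The plan is to invoke Theorem \ref{thm:Main1}-\textbf{(III)} with $\varphi(n):=C\widetilde\varphi(n)$ and control measure $\nu=\ell\times\mu$. Items \textbf{(a)} and \textbf{(b)} are cited from the references listed in the statement, so the only work for item \textbf{(c)} is to verify Assumptions (ii)--(iv) of Part \textbf{(II)} for the sequence $\widetilde F_n$, and to check that the nondegeneracy condition of Part \textbf{(III)} translates into $\alpha_{h_1,h_2}\rho_{h_1,h_2}\neq -1/2$. Structurally, the argument follows the blueprint of the proof of Theorem \ref{t:maingraph}, with the simplification that here the kernel $h$ is fixed in $n$, so that all contractions carry clean powers of $n$.

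First I would start from the chaotic representation \eqref{e:z}--\eqref{kernels2} and write
\[
D_{t,z}\widetilde{F}_n \;=\; \frac{1}{\sqrt{\mathrm{Var}(F_n)}}\Big\{h_{1,n}(t,z) \;+\; 2\, I_1\big(h_{2,n}((t,z),\cdot)\big)\Big\}.
\]
Since $h_{1,n}(t,z)=O(n)$ and $\|h_{2,n}((t,z),\cdot)\|_{L^2(\nu)}^2=O(n)$ while $\mathrm{Var}(F_n)=\Theta(n^3)$ by \textbf{(a)}, both summands vanish in $L^2(\P)$ a.e., giving Assumption (ii). For Assumption (iii), the product formula for multiple Poisson integrals (see \cite[Section 6.5]{pt-book}) yields
\begin{align*}
\langle D\widetilde{F}_n,-DL^{-1}\widetilde{F}_n\rangle_{L^2(\nu)} = \frac{1}{\mathrm{Var}(F_n)}\Big\{&\|h_{1,n}\|_{L^2(\nu)}^2 + 3\, I_1(h_{1,n}\star^1_1 h_{2,n}) \\
&+ 2\, I_1(h_{2,n}\star^1_2 h_{2,n}) + 2\, I_2(h_{2,n}\star^1_1 h_{2,n})\Big\}.
\end{align*}
The deterministic piece is $\mathrm{Var}(F_{1,n})/\mathrm{Var}(F_n)\to 1$ by \textbf{(a)}. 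Bounding the norms of the remaining kernels (using $h\in L^2(\mu^2)$ and $h_1\in L^3(\mu)$) shows that, after dividing by $\varphi(n)$, the contributions of $I_2(h_{2,n}\star^1_1 h_{2,n})$ and $I_1(h_{2,n}\star^1_2 h_{2,n})$ tend to $0$ in $L^2(\P)$, while the term $3\, I_1(h_{1,n}\star^1_1 h_{2,n})/(\varphi(n)\mathrm{Var}(F_n))$ has asymptotic variance equal to $\alpha^2_{h_1,h_2}$. Joint Gaussian convergence of the pair $(\widetilde F_n,\varphi(n)^{-1}(1-\langle D\widetilde F_n,-DL^{-1}\widetilde F_n\rangle_{L^2(\nu)}))$ is then obtained by invoking \cite[Corollary 3.4]{p-z-multi}, just as in the proof of Theorem \ref{t:maingraph}; the limiting covariance, computed from $\langle h_{1,n},h_{1,n}\star^1_1 h_{2,n}\rangle_{L^2(\nu)}$, is proportional to $\rho_{h_1,h_2}$ in \eqref{variance-corrolation}.

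For Assumption (iv) I would choose $u_n := h_{1,n}^3/\mathrm{Var}(F_n)^{3/2}$; the hypothesis $h\ge 0$ is precisely what ensures $u_n\ge 0$ a.e. A further application of the product formula to $(D\widetilde F_n)^2\cdot(-DL^{-1}\widetilde F_n)$ yields three pieces, the first being $\int u_n R^f_n\,\ud\nu$ and the other two containing $I_1$-integrals of $h_{2,n}$ multiplied by $h_{1,n}$ or $h_{1,n}^2$. A Cauchy--Schwarz argument combined with the isometry \eqref{iso}, mimicking the bounds for $B_{2,n}$ and $B_{3,n}$ in the proof of Theorem \ref{t:maingraph}, shows that each of these error terms is $o(\varphi(n))$ in $L^1(\Omega)$. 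The integrability condition $\sup_n\varphi(n)^{-(1+\epsilon)}\int u_n^{1+\epsilon}\,\ud\nu<\infty$ reduces, for $\epsilon=1/3$, to finiteness of $\|h_1\|_{L^4(\mu)}^4$, which is controlled by $\|h\|_{L^2(\mu^2)}^2$ via Jensen. The limit $\beta:=\lim_n \int u_n\,\ud\nu/\varphi(n)$ is a positive multiple of $\|h_1\|_{L^3(\mu)}^3/(C\|h_1\|_{L^2(\mu)}^3)$, and collecting all explicit constants shows that $\beta/2+\rho\alpha$ is proportional to $\alpha_{h_1,h_2}\rho_{h_1,h_2}+1/2$. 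Consequently the nondegeneracy condition of Theorem \ref{thm:Main1}-\textbf{(III)} becomes exactly $\alpha_{h_1,h_2}\rho_{h_1,h_2}\neq -1/2$, and the theorem delivers the lower bound $c\,\widetilde\varphi(n)\le d_W(\widetilde F_n,N)$.

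The main obstacle I foresee is the bookkeeping of multiplicative constants, so as to land precisely on the threshold $-1/2$: the factors of $2$ in \eqref{kernels}, the combinatorial coefficients $1,3,2,2$ (respectively $1,5,8$) coming out of the two successive applications of the product formula, and the normalising constant $C$ entering $\varphi(n)$ must all be tracked consistently. Once this is done the verification of Assumptions (ii)--(iv) runs in close parallel with the proof of Theorem \ref{t:maingraph}, and Part \textbf{(III)} of the main theorem closes the argument.
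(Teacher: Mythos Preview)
Your proposal is correct and follows exactly the approach the paper intends: the paper does not give an independent proof of Theorem \ref{thm:Ustat2} but simply states that ``a proof similar to that of Theorem \ref{t:maingraph} yields the following statement,'' and your outline is precisely such an adaptation --- verifying Assumptions (ii)--(iv) of Theorem \ref{thm:Main1}-\textbf{(II)} via the chaotic decomposition, the product formula, the choice $u_n=h_{1,n}^3/\mathrm{Var}(F_n)^{3/2}$, and the joint CLT from \cite{p-z-multi}. Two small bookkeeping points to watch when you fill in the details: the deterministic part of $\langle D\widetilde F_n,-DL^{-1}\widetilde F_n\rangle_{L^2(\nu)}$ is $(\|h_{1,n}\|_{L^2(\nu)}^2+2\|h_{2,n}\|_{L^2(\nu^2)}^2)/\mathrm{Var}(F_n)=1$ exactly (not merely $\mathrm{Var}(F_{1,n})/\mathrm{Var}(F_n)$), and the cubic expansion $(h_{1,n}+2I_1)^2(h_{1,n}+I_1)$ produces a fourth term $4I_1(h_{2,n})^3$ beyond the $1,5,8$ pieces, which must also be shown to be $o(\varphi(n))$ by the same Cauchy--Schwarz/isometry argument.
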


\end{document}